        \newcommand{\N}{\ensuremath{\mathbb{N}}}  
  	\newcommand{\Z}{\ensuremath{\mathbb{Z}}}   
  	\newcommand{\R}{\ensuremath{\mathbb{R}}} 
   	\def\Aut{{\rm{Aut}}$(F_n)$}
        \def\Autn{{\rm{Aut}}(F_n)}
   	\def\SAut{{\rm{SAut}}$(F_n)$}
        \def\SAutn{{\rm{SAut}}(F_n)}
	\def\GL{{\rm{GL}}}
	\def\W{{\rm{W}}}
	\def\Eig{{\rm{Eig}}}
	\def\N{{\rm{N}}}
 	\def\SW{{\rm{SW}}}
	\def\Z{{\mathbb{Z}}}
	\def\SN{{\rm{SN}}}
	\def\im{{\rm{im}}}
\theoremstyle{plain}
\newtheorem*{NewTheorem}{Theorem}
\newtheorem{theorem}{Theorem}[section]
\newtheorem{lemma}[theorem]{Lemma}
\newtheorem{proposition}[theorem]{Proposition}
\newtheorem{remark}[theorem]{Remark}
\title[Low dimensional linear representations of \SAut]{Low dimensional linear representations of $\mathbf{SAut}\boldsymbol{(F_n)}$}
\author{Olga Varghese}
\date{\today}
\address{Olga Varghese\\
Department of Mathematics\\
M\"unster University\\ 
Einsteinstra\ss e 62\\
48149 M\"unster (Germany)}
\email{olga.varghese@uni-muenster.de}
\begin{document}
\pagenumbering{arabic}
\begin{abstract}
We prove that \SAut, the unique subgroup of index two in the automorphism group of a free group of rank $n$, admits no non-trivial linear representation of degree $d<n$ for any field of characteristic not equal to two.
\end{abstract}

\maketitle

\section{Introduction}

In this work we study low dimensional linear representations of the group \SAut\ and we prove strong rigidity results for these.  To be precise, let $\mathbb{Z}^n$ be the free abelian group and $F_{n}$ the free group of rank $n$. 
The abelianization map $F_{n}\twoheadrightarrow \Z^{n}$ gives a natural epimorphism 
$\Autn\twoheadrightarrow\GL_{n}(\Z).$
The group \SAut\ is defined as the preimage of ${\rm SL}_{n}(\Z)$ under this map. The group ${\rm SL}_{n}(\Z)$  acts faithfully by linear transformations on $\mathbb{R}^n$. 
We shall prove that the linear representation: 
$\SAutn\twoheadrightarrow{\rm SL}_{n}(\Z)\hookrightarrow{\rm SL}_n(\R)$
is minimal in the following sense:
\begin{NewTheorem}
Let $n\geq 3$ and let $\rho:\SAutn\rightarrow{\rm SL}_{d}(K)$
be a linear representation of degree $d$ over a field $K$ with ${\rm char}(K)\neq 2$. If $d<n$, then $\rho$ is trivial.
\end{NewTheorem}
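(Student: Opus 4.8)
The plan is to combine the representation theory of a distinguished finite subgroup with the Nielsen relations that link that subgroup to the transvections generating \SAutn, and to run the whole argument by induction on $n$.

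First the reductions. Since $H_1(\Autn;\Z)\cong\Z/2$ for $n\geq 3$, the group \SAutn\ is perfect, so $\rho$ automatically takes values in $\SL_d(K)$ and ``$\rho$ trivial'' means exactly ``$\rho$ not injective''; extending scalars, I may take $K$ algebraically closed. An induction on $n$ reduces the theorem to the case $d=n-1$: if the statement is known in rank $n-1$ and $d<n-1$, then $\rho$ is already trivial on the copy of ${\rm SAut}(F_{n-1})$ carried by $x_1,\dots,x_{n-1}$, and since every Nielsen transvection $\lambda_{ij}\colon x_i\mapsto x_jx_i$ is \SAutn-conjugate (via a signed permutation moving $x_n$ off its support) to one supported on the first $n-1$ generators, $\rho$ is trivial. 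So assume $d=n-1$. I will use: \SAutn\ is generated by the transvections $\lambda_{ij}$ and $\rho_{ij}\colon x_i\mapsto x_ix_j$; the product $\lambda_{ij}\rho_{ij}^{-1}$ is the partial conjugation $c_{ij}\colon x_i\mapsto x_jx_ix_j^{-1}$; and \SAutn\ contains the ``even signed permutation'' group $W$, the index-two subgroup of the hyperoctahedral group acting on the basis, with normal elementary abelian subgroup $V\cong(\Z/2)^{\,n-1}$ (even sign changes) and $W/V\cong\Sym$.

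\emph{Step 1 (kill the sign changes).} This is where $\mathrm{char}(K)\neq2$ is used. I would show that no faithful $K$-representation of $W$ has degree $n-1$: a faithful one would restrict to $V$ as a sum of $n-1$ pairwise distinct non-trivial characters, and since it extends to $W$ this multiset of characters is a union of orbits for the $\Sym$-action on the character group of $V$; an orbit count then shows that no such union of total size $n-1$ separates the points of $V$ (e.g.\ the sign change flipping all $n$ basis elements lies in every resulting common kernel). In characteristic two this collapses, because $V$ becomes unipotent -- which is precisely why the hypothesis is needed. Hence $\rho$ kills a non-trivial sign change, and as these form a single \SAutn-conjugacy class, $\rho|_V$ is trivial.

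\emph{Step 2 (kill the transvections).} Conjugating $\lambda_{ij}$ inside $V$ by the double sign change on $\{x_j,x_k\}$ (with $k\neq i,j$; available since $n\geq3$) returns $\lambda_{ij}^{-1}$, and conjugating by the double sign change on $\{x_i,x_j\}$ returns $\rho_{ij}$. Hence $\rho(\lambda_{ij})^2=1$ and $\rho(\lambda_{ij})=\rho(\rho_{ij})$, so $\rho(c_{ij})=1$ for all $i\neq j$. Using the identity $K_{ijk}=[\rho_{ij},\rho_{ik}]$ for the Magnus generator $x_i\mapsto x_i[x_j,x_k]$ together with its mirror $[\lambda_{ij},\lambda_{ik}]$, and the relations just obtained, I expect $\ker\rho$ to contain all of ${\rm IA}_n$; then $\rho$ factors through a quotient of $\SL_n(\Z)/\langle\langle E_{ij}^2\rangle\rangle\cong\GL_n(\mathbb F_2)$ (the last identification via the congruence subgroup property). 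For $n\geq3$ this group is simple and, in characteristic $\neq2$, has no non-trivial representation of degree $\leq n-1$; so $\rho$ is trivial and the theorem follows.

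\emph{Main obstacle.} The crux is the end of Step 2: showing that the partial conjugations $c_{ij}$ -- together with the already trivialised sign changes and squared transvections -- normally generate ${\rm IA}_n$ in \SAutn. Because \SAutn\ does not enjoy the congruence subgroup property and its lattice of normal subgroups is not understood, this cannot be had by soft means and must be extracted from an explicit (Gersten-type) presentation. A smaller but genuine difficulty is the borderline orbit count in Step 1: since $d=n-1$ equals the rank of $V$, the abelian group $V$ alone does admit a faithful $(n-1)$-dimensional representation, so one must really use compatibility with the $\Sym$-action on the characters of $V$, not just the structure of $V$ itself.
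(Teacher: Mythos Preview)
There are two genuine gaps. In Step~1 the assertion that the non-trivial elements of $V=\SN_n$ form a single $\SAutn$-conjugacy class is false: the natural map $\SAutn\twoheadrightarrow\SL_n(\Z)$ sends $e_1e_2$ and $e_1e_2e_3e_4$ to diagonal matrices of different trace, so they are not conjugate. Your orbit count does show that for $n\neq 4$ every $\Sym$-invariant multiset of $n-1$ characters of $V$ consists only of the trivial character (each non-trivial orbit having size at least $n$), so in those cases $\rho|_V$ is indeed trivial. But for $n=4$ the orbit $\{\chi_{12},\chi_{13},\chi_{14}\}$ has size $3$ and common kernel $\{1,e_1e_2e_3e_4\}$; your argument then yields only $e_1e_2e_3e_4\in\ker\rho$, not all of $V$. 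The paper treats $n=4$ as a separate case for exactly this reason.

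In Step~2 you correctly derive $\rho(\lambda_{ij})^2=1$ and $\rho(c_{ij})=1$ from $\rho|_V=1$, but to get ${\rm IA}_n\subseteq\ker\rho$ you also need $\rho(K_{ijk})=1$, i.e.\ that the involutions $\rho(\lambda_{ij})$ and $\rho(\lambda_{ik})$ commute; nothing you have written forces this, and you acknowledge as much. This missing step is essentially the content of the Bridson--Vogtmann result recorded here as Proposition~\ref{faktorisiertSL}(i), which the paper invokes as a black box rather than re-deriving from a Gersten presentation. The paper's route is therefore rather different from yours: instead of reducing to $d=n-1$ and analysing $\Sym$-orbits on $\hat V$, it uses the elementary count of Proposition~\ref{UntergruppeTrivial} to find \emph{some} non-trivial element of $\SN_n$ in $\ker\rho$, appeals to Proposition~\ref{faktorisiertSL} to factor $\rho$ through $\SL_n(\Z_2)$, and carries the induction from $n-2$ to $n$ via the eigenspace decomposition of $\rho(e_1e_2)$ rather than via your conjugation of transvections.
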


We note that the group \SAut\ is perfect, therefore the image of a linear representation of \SAut\ is a subgroup of ${\rm SL}_{d}(K)$. 

Indeed, \textsc{Bridson} and \textsc{Vogtmann} proved in \cite{Vogtmann} that if $n\geq 3$ and $d<n$, then \SAut\ cannot act non-trivially by homeomorphisms on any contractible manifold of dimension $d$. Using their techniques we proved in a purely group theoretical way the above theorem.

\section{Linear representations}
\subsection{The automorphism group of a free group}

As the main protagonist in this work is the group \SAut, we start with the definition of this group, and establish some notation to be used throughout. 

We begin with the definition of the automorphism group of the free group of rank~$n$.
Let $F_{n}$ be the free group of rank $n$ with a fixed basis $X:=\left\{x_{1}, \ldots, x_{n}\right\}$. We denote by \Aut\ the automorphism group of $F_n$ and by \SAut\ the unique subgroup of index two in \Aut. 

Let us first introduce a notations for some elements of \Aut.   We define  involutions $(x_{i},x_{j})$ and $e_{i}$ for $i, j=1,\ldots, n$,~$i\neq j$ as follows:
\[
\begin{matrix}
(x_{i},x_{j})(x_{k}):=\begin{cases} x_{j} & \mbox{if $k=i$,}  \\ 
			     x_{i} & \mbox{if $k=j$,}  \\ 
			     x_{k} & \mbox{if $k\neq i,$ $j$.}  
\end{cases}
&
e_{i}(x_{k}):=\begin{cases} x^{-1}_{i} & \mbox{if $k=i$,}  \\
				 x_{k} & \mbox{if $k\neq i$}.   
\end{cases}
\end{matrix}
\]

\subsection[Some finite subgroups of \Aut]{Some finite subgroups of $\mathbf{Aut}\boldsymbol{(F_n)}$}
We begin this subsection by describing some finite subgroups of \Aut\ and \SAut:
\begin{align*}
\Sigma_{n}&:=\left\{\alpha\in{\rm Aut}(F_{n})\mid\alpha_{|X}\in{\rm Sym}(X)\right\}\cong{\rm Sym}(n),\\
     \N_{n}&:=\langle\left\{e_{i} \mid i=1,\ldots, n\right\}\rangle\cong\Z^{n}_{2},\\
     \SN_{n}&:=\N_{n}\cap \SAutn=\langle\left\{e_{1}e_i\mid i=2,\ldots,n\right\}\rangle\cong\Z^{n-1}_{2},\\
     \W_{n}&:=\langle \N_{n}\cup\Sigma_{n}\rangle=\N_{n}\rtimes\Sigma_{n},\\
     \SW_{n}&:= \W_{n}\cap \SAutn.
\end{align*}

The following variant of a result by \textsc{Bridson} and \textsc{Vogtmann} \cite[3.1]{Vogtmann} will be used here to prove that certain actions on spaces of \SAut\ are indeed trivial. For a detailed proof the reader is referred to \cite[1.13]{Diplomarbeit}.
\begin{proposition}
\label{faktorisiertSL}
Let $n\geq 3$, $G$ be a group and $\phi : \SAutn\rightarrow G$ a group homomorphism. 
\begin{enumerate}
\item[$(i)$] If  there exists $\alpha\in\SW_n-\left\{{\rm id}_{F_n}, e_{1}e_{2}\ldots e_{n}\right\}$ with $\phi(\alpha)=1$, then  $\phi$ factors through ${\rm SL}_{n}(\Z_{2})$.
\item[$(ii)$] If $n$ is even and  $\phi(e_{1}e_{2}\ldots e_{n})=1$, then $\phi$ factors through ${\rm PSL}_{n}(\Z)$.
\item[$(iii)$] If there exists $\alpha\in\SW_{n}-\SN_{n}$ with $\phi(\alpha)=1$, then $\phi$ is trivial.
\end{enumerate}
\end{proposition}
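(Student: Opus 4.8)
The plan is to study, in each case, the normal closure $N:=\langle\langle\alpha\rangle\rangle$ of $\alpha$ in $\SAutn$ and to identify the quotient $\SAutn/N$; since $N\subseteq\ker\phi$, this forces the asserted factorisation. I will use throughout: a presentation of $\SAutn$ ($n\geq3$) in the Nielsen transvections $\rho_{ij}\colon x_i\mapsto x_ix_j$ and $\lambda_{ij}\colon x_i\mapsto x_jx_i$; the Steinberg-type relations among these (transvections on disjoint index pairs commute; $[\rho_{ij},\rho_{jk}]=\rho_{ik}^{\pm1}$, and similar); and the relations describing how $\W_n$ conjugates transvections, in particular $e_j\rho_{ij}e_j=\rho_{ij}^{-1}$, that permutations permute indices, and that $\iota:=e_1e_2\ldots e_n$ carries each $\rho_{ij}$ to $\lambda_{ij}$. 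I will also use that the transvections form a single conjugacy class in $\SAutn$ (combining sign changes to invert or reflect a transvection with permutations to move indices) and generate $\SAutn$, so that any normal subgroup of $\SAutn$ containing one transvection is all of $\SAutn$. Recall finally the quotients $\SAutn\twoheadrightarrow\SL_n(\Z)$ with kernel ${\rm IA}_n$, its reduction $\SAutn\twoheadrightarrow\SL_n(\Z_2)$, and $\SAutn\twoheadrightarrow{\rm PSL}_n(\Z)$ (killing $-{\rm id}$) when $n$ is even. One preliminary remark: if $\alpha\notin\SN_n$ then (i) follows at once from (iii), so in proving (i) and (ii) I may assume $\alpha\in\SN_n$.

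For (iii), write $\alpha=\nu\sigma\in\N_n\rtimes\Sigma_n=\W_n$ with $\sigma\neq1$. First I would compute inside the finite group $\SW_n$: since $\sigma\neq1$, a suitable $\mu\in\N_n$ makes $[\alpha,\mu]=\sigma\mu\sigma^{-1}\mu^{-1}$ a nontrivial element of $\SN_n$, and its $\SW_n$-orbit generates $\SN_n$; thus $\SN_n\subseteq N$, and $N/(N\cap\SN_n)$ contains the normal closure of $\sigma$ in ${\rm Sym}(n)$, which is ${\rm Sym}(n)$ or $A_n$ (with a minor variant for $n=4$, $\sigma$ a double transposition). Hence for $n\geq4$ the set $N$ contains a permutation automorphism $\pi$ fixing some index $i$ and moving some $j$ to $k\neq i$; then $\pi\rho_{ij}\pi^{-1}=\rho_{ik}$, so $N\ni[\rho_{ij},\pi]=\rho_{ij}\rho_{ik}^{-1}=(x_i\mapsto x_ix_jx_k^{-1})$, which is conjugate in $\SAutn$ to $\rho_{ij}$, hence a transvection. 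Therefore $N=\SAutn$ and $\phi$ is trivial. (The case $n=3$ needs a different production of a transvection from the $3$-cycle but reaches the same conclusion.)

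For (i) and (ii) I work with $\alpha\in\SN_n$. In (i), $\alpha=\prod_{i\in S}e_i$ with $\emptyset\neq S\subsetneq\{1,\ldots,n\}$; conjugating by permutations puts every $\prod_{i\in S'}e_i$ with $|S'|=|S|$ into $N$, and multiplying two whose supports differ in exactly two places yields $e_ae_b\in N$ for all $a\neq b$, so $N\supseteq\SN_n$ and hence $N=\langle\langle\SN_n\rangle\rangle$. I would then show $\langle\langle\SN_n\rangle\rangle=\ker(\SAutn\to\SL_n(\Z_2))$: one inclusion is clear since each $e_i\equiv{\rm id}$ mod $2$, and for the other I would pass to $Q:=\SAutn/\langle\langle\SN_n\rangle\rangle$, use $e_be_c$ in place of $e_j$ in $e_j\rho_{ij}e_j=\rho_{ij}^{-1}$ (so as to stay inside $\SN_n$) to conclude that every transvection has order dividing $2$ in $Q$, and then check that the defining relators of $\SAutn$ collapse to a presentation of $\SL_n(\Z_2)$. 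In (ii), $n$ is even and $\alpha=\iota$; from $\iota\rho_{12}\iota=\lambda_{12}$ one gets $[\iota,\rho_{12}]=\lambda_{12}\rho_{12}^{-1}\in N$, and since $\rho_{12},\lambda_{12}$ have equal image in $\SL_n(\Z)$ this element lies in ${\rm IA}_n$ — explicitly it is a conjugation automorphism $x_1\mapsto x_2^{\pm1}x_1x_2^{\mp1}$. As ${\rm IA}_n$ is the normal closure in $\SAutn$ of such an automorphism, ${\rm IA}_n\subseteq N$, and then $N\supseteq\langle\langle {\rm IA}_n\cup\{\iota\}\rangle\rangle=\ker(\SAutn\to{\rm PSL}_n(\Z))$, the preimage of the centre $\langle-{\rm id}\rangle$ of $\SL_n(\Z)$. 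In both cases this gives the asserted factorisation.

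The step I expect to be the main obstacle is the presentation argument in (i): verifying that $\SAutn/\langle\langle\SN_n\rangle\rangle$ is exactly $\SL_n(\Z_2)$ and not merely a group surjecting onto it requires an explicit presentation of $\SAutn$ in the transvection generators and a careful check that all defining relators collapse as claimed (with some attention to small $n$, e.g.\ Schur-multiplier subtleties for $\SL_3(\Z_2)$). This, together with the small-$n$ bookkeeping in (iii), is the technical heart of the result and is carried out in detail in \cite{Diplomarbeit} following \cite{Vogtmann}.
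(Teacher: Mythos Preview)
The paper does not actually prove this proposition: it is stated with a reference to \cite[3.1]{Vogtmann} and \cite[1.13]{Diplomarbeit} for the detailed argument, so there is no in-paper proof to compare against. Your outline follows exactly the Bridson--Vogtmann strategy underlying those references---analysing the normal closure of $\alpha$ via the conjugation action of $\SW_n$ on Nielsen transvections, producing a transvection in case~(iii), and identifying $\SAutn/\langle\langle\SN_n\rangle\rangle$ with $\SL_n(\Z_2)$ in case~(i)---and you correctly isolate the presentation check in~(i) and the small-$n$ bookkeeping in~(iii) as the points needing care, which is precisely what the cited sources supply.
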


We divide the proof of our theorem into two steps. In the first step we show that for $d<n$ 
any homomorphism $\widetilde{\rho}: {\rm SL}_{n}(\Z_{2})\rightarrow{\rm SL}_d(K)$ is trivial. In the second step we prove by induction on~$n$ that for $d<n$ any homomorphism $\rho:\SAutn\rightarrow{\rm SL}_{d}(K)$ factors through ${\rm SL}_{n}(\Z_{2})$.
\[
\begin{xy}
\xymatrix
{
{\rm SAut}(F_{n}) \ar[rr]^{\rho} \ar[dr]_{\pi} & & {\rm SL}_d(K) \\
& {\rm SL}_{n}(\Z_2) \ar[ur]_{\widetilde{\rho}} 
}
\end{xy}
\]

A key observation is the following proposition.
\begin{proposition}
\label{UntergruppeTrivial}
Let $K$ be a field with ${\rm char}(K)\neq 2$ and let $\phi: \Z^{m}_{2}\rightarrow{\rm SL}_{d}(K)$ be a group homomorphism. If $d\leq m$, then $\phi$ is not injective.
\end{proposition}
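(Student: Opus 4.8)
The plan is to exploit the hypothesis $\mathrm{char}(K)\neq 2$ to bring the image of $\phi$ into diagonal form. Over such a field the polynomial $x^{2}-1=(x-1)(x+1)$ has two distinct roots, so every matrix in $\mathrm{GL}_{d}(K)$ of order dividing $2$ is diagonalizable over $K$, with all eigenvalues in $\{\pm 1\}$; this is the only point where $\mathrm{char}(K)\neq 2$ is used. Since $\Z_{2}^{m}$ is abelian, $\phi(\Z_{2}^{m})$ is a finite family of pairwise commuting diagonalizable matrices, and any such family is simultaneously diagonalizable over $K$. Conjugating by a suitable element of $\mathrm{GL}_{d}(K)$ changes neither determinants nor the injectivity of $\phi$, so I may assume from the outset that $\phi(\Z_{2}^{m})$ consists of diagonal matrices with entries in $\{\pm 1\}$.

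Next I would count. A diagonal matrix $\mathrm{diag}(\epsilon_{1},\ldots,\epsilon_{d})$ with each $\epsilon_{i}\in\{\pm 1\}$ lies in $\mathrm{SL}_{d}(K)$ precisely when $\epsilon_{1}\cdots\epsilon_{d}=1$, and the set $D$ of such matrices is a subgroup of $\mathrm{SL}_{d}(K)$ isomorphic to $\Z_{2}^{d-1}$. Hence $\phi$ factors through a homomorphism $\Z_{2}^{m}\to D\cong\Z_{2}^{d-1}$. If $\phi$ were injective, this would force $m\leq d-1$, contradicting the assumption $d\leq m$; therefore $\phi$ is not injective.

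The substance of the argument is really the determinant-one reduction: without it one only obtains an embedding of $\Z_{2}^{m}$ into the diagonal group $\Z_{2}^{d}$, which is compatible with $m=d$, so the strict inequality — and with it the non-injectivity — would be lost. The only mild technical point is to make sure the simultaneous diagonalization is carried out over $K$ itself rather than over an extension; this is automatic here, since all eigenvalues are $\pm 1\in K$, and the classical fact that a commuting family of diagonalizable endomorphisms of a finite-dimensional vector space is simultaneously diagonalizable applies directly.
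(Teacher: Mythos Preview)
Your proof is correct and follows essentially the same route as the paper: diagonalize each involution using $\mathrm{char}(K)\neq 2$, simultaneously diagonalize the commuting family, and then observe that the diagonal $\pm 1$ matrices of determinant $1$ form a group of order $2^{d-1}<2^{m}$. The only cosmetic difference is that the paper phrases the last step as an explicit binomial-coefficient count $\sum_{i}\binom{d}{2i}=2^{d-1}$, whereas you identify the target directly as $\Z_{2}^{d-1}$; your remark that the diagonalization takes place over $K$ itself (since $\pm 1\in K$) is a useful clarification the paper leaves implicit.
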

\begin{proof}
First, we recall an important characterization of diagonalizable linear maps: a linear map is diagonalizable over the field $K$ if and only if its minimal polynomial is a product of distinct linear factors over $K$, see \cite[Thm. 6, p.~204]{Hoffmann}. The minimal  polynom  of an involution is a divider of $x^2-1=(x+1)(x-1)$ and satisfies this characterization if ${\rm char}(K)\neq 2$. We observe that all elements in the image of $\phi$ have order less or equal to two, therefore the elements in the image of $\phi$ are diagonalizable. 

Next, we note that all elements in the image of $\phi$ commute, therefore these are simultaneously diagonalizable, see \cite[p.~51-53]{Horn}.

For $A$ in the image of $\phi$ we have $\Eig(A,1)\oplus\Eig(A,-1)=K^{d}$, where we denote by $\Eig(A,1)$ resp. $\Eig(A,-1)$ the eigenspace for the eigenvalue $1$ resp. $-1$. We note that $\det(A)=1$. 
The number of diagonal matrices in ${\rm SL}_d(K)$ with an even number of entries equal to $-1$ is given by
\[
\sum\limits_{i=0}^{\lfloor\frac{d}{2}\rfloor}\binom{d}{2i}=2^{d-1}.
\]
Therefore the image of $\phi$ contains at most $2^{d-1}$ elements. But we have  
\[
{\rm ord}(\Z^{m}_{2})=2^m>2^{d-1}\geq {\rm ord}(\im(\phi(\Z^{m}_{2})))
\]
and therefore $\phi$ is not injective.
\end{proof}

Using Proposition \ref{UntergruppeTrivial} we obtain the following result. 
\begin{proposition}
\label{SLTrivial}
Let $n\geq 3$ and $\widetilde{\rho}:{\rm SL}_{n}(\Z_{2})\rightarrow{\rm SL}_d(K)$
be a linear representation of degree $d$ over a field $K$ with ${\rm char}(K)\neq 2$.
If $d<n$, then $\widetilde{\rho}$ is trivial.
\end{proposition}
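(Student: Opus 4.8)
The plan is to deduce the statement from Proposition~\ref{UntergruppeTrivial} together with the classical fact that ${\rm SL}_{n}(\Z_2)$ is a simple group for every $n\geq 3$. Recall that over the field $\Z_2$ one has ${\rm SL}_{n}(\Z_2)={\rm GL}_{n}(\Z_2)={\rm PSL}_{n}(\Z_2)$, and that ${\rm PSL}_{n}(\Z_2)$ is simple for all $n\geq 3$. Since $\ker\widetilde{\rho}$ is a normal subgroup of ${\rm SL}_{n}(\Z_2)$, it is either trivial or the whole group; in the second case $\widetilde{\rho}$ is trivial and we are done. So it suffices to show that $\widetilde{\rho}$ cannot be injective.

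To this end I would exhibit inside ${\rm SL}_{n}(\Z_2)$ an elementary abelian $2$-subgroup of rank $n-1$. Writing $E_{i,j}$ for the $n\times n$ matrix with a $1$ in position $(i,j)$ and $0$ elsewhere, set
\[
U:=\Bigl\{\,I+\sum_{i=1}^{n-1}a_{i}E_{i,n}\ \mid\ a_{i}\in\Z_2\,\Bigr\}.
\]
Every element of $U$ is unipotent and upper triangular with $1$'s on the diagonal, hence lies in ${\rm SL}_{n}(\Z_2)$. Since $E_{i,n}E_{j,n}=\delta_{nj}E_{i,n}=0$ for all $i,j\in\{1,\dots,n-1\}$, the product of the nilpotent parts of any two elements of $U$ vanishes; consequently $U$ is closed under multiplication, every nontrivial element of $U$ has order two, and the map $(a_{1},\dots,a_{n-1})\mapsto I+\sum a_{i}E_{i,n}$ is an isomorphism $\Z^{n-1}_{2}\xrightarrow{\ \sim\ }U$.

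Now suppose, for contradiction, that $\widetilde{\rho}$ is injective. Then the restriction $\widetilde{\rho}_{|U}\colon\Z^{n-1}_{2}\to{\rm SL}_{d}(K)$ is an injective homomorphism. Since $d<n$, we have $d\leq n-1$, so Proposition~\ref{UntergruppeTrivial}, applied with $m=n-1\geq d$, shows that $\widetilde{\rho}_{|U}$ is \emph{not} injective --- a contradiction. Hence $\ker\widetilde{\rho}$ is nontrivial, and by simplicity $\ker\widetilde{\rho}={\rm SL}_{n}(\Z_2)$, i.e. $\widetilde{\rho}$ is trivial.

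The only input borrowed from outside the paper is the simplicity of ${\rm SL}_{n}(\Z_2)$ for $n\geq 3$; if one preferred to avoid invoking it, one could instead use that ${\rm SL}_{n}(\Z_2)$ is generated by transvections, which form a single conjugacy class with a representative lying in $U$, and run a normal-subgroup argument. I do not expect a serious obstacle in either case: the construction of $U$ is routine, and the single point to watch is the bookkeeping $d\leq n-1=m$ that makes the hypothesis of Proposition~\ref{UntergruppeTrivial} applicable.
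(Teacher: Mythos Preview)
Your proof is correct and follows essentially the same approach as the paper's: invoke simplicity of ${\rm SL}_n(\Z_2)$, exhibit an elementary abelian $2$-subgroup $U\cong\Z_2^{n-1}$, and apply Proposition~\ref{UntergruppeTrivial} to force the kernel to be nontrivial. The only cosmetic difference is that the paper takes $U=\langle E_{12},\dots,E_{1n}\rangle$ (a first-row subgroup) whereas you take the last-column subgroup; both choices work for the same reason.
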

\begin{proof}
By simplicity of ${\rm SL}_n(\Z_{2})$ the kernel of the map $\widetilde{\rho}$ is either trivial or all of~${\rm SL}_n(\Z_{2})$.
The group ${\rm SL}_{n}(\Z_{2})$ contains a subgroup $U$ which is isomorphic to~$\Z^{n-1}_{2}$,
$U=\langle\left\{{\rm E}_{12},\ldots, {\rm E}_{1n}\right\}\rangle$,
where we denote by ${\rm E}_{1i}$ the matrix  which has ones on the main diagonal and in the entry $(1,i)$ and zeros elsewhere.
By Proposition \ref{UntergruppeTrivial} the restriction of  $\widetilde{\rho}$ to this group is not injective, therefore the kernel of the map $\widetilde{\rho}$ is equal to ${\rm SL}_n(\Z_{2})$.
\end{proof}

First, we show the main theorem for $n=3$ and $n=4$ and then proceed by induction on~$n$.
\begin{lemma}
\label{n3}
Let $\rho:{\rm SAut}(F_{3})\rightarrow{\rm SL}_d(K)$
be a linear representation of degree $d$ over a field $K$ with ${\rm char}(K)\neq 2$. If $d<3$, then $\rho$ is trivial.
\end{lemma}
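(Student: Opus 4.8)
The plan is to combine the three propositions already at our disposal. Since $d<3$ we have $d\leq 2$, so I would first restrict $\rho$ to the finite subgroup $\SN_{3}\cong\Z^{2}_{2}$. Proposition \ref{UntergruppeTrivial}, applied with $m=2$, then says that this restriction is not injective: there is an element $\alpha\in\SN_{3}$ with $\alpha\neq{\rm id}_{F_{3}}$ and $\rho(\alpha)=1$.

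Next I would feed this $\alpha$ into Proposition \ref{faktorisiertSL}(i). For that I need $\alpha\in\SW_{3}-\left\{{\rm id}_{F_{3}},e_{1}e_{2}e_{3}\right\}$. The inclusion $\SN_{3}\subseteq\SW_{3}$ is immediate from the definitions, and $\alpha\neq{\rm id}_{F_{3}}$ by construction; moreover $e_{1}e_{2}e_{3}$ maps to a matrix of determinant $(-1)^{3}=-1$ under abelianization, hence $e_{1}e_{2}e_{3}\notin{\rm SAut}(F_{3})$ and in particular $\alpha\neq e_{1}e_{2}e_{3}$. So Proposition \ref{faktorisiertSL}(i) applies and yields a homomorphism $\widetilde{\rho}:{\rm SL}_{3}(\Z_{2})\rightarrow{\rm SL}_{d}(K)$ with $\rho=\widetilde{\rho}\circ\pi$, where $\pi:{\rm SAut}(F_{3})\rightarrow{\rm SL}_{3}(\Z_{2})$ is the natural projection.

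Finally, Proposition \ref{SLTrivial} with $n=3$ (using again $d<3$) shows that $\widetilde{\rho}$ is trivial, hence so is $\rho=\widetilde{\rho}\circ\pi$. I do not expect a genuine obstacle in this argument; the only point requiring a little care is to confirm that the non-trivial kernel element supplied by Proposition \ref{UntergruppeTrivial} is admissible for Proposition \ref{faktorisiertSL}(i), i.e.\ that it is neither the identity nor the exceptional element $e_{1}e_{2}e_{3}$, and for $n=3$ the latter is automatic because $e_{1}e_{2}e_{3}$ is not even an element of ${\rm SAut}(F_{3})$. The subcase $d=1$ is trivial on its own, since ${\rm SL}_{1}(K)$ is the trivial group, so the real content lies in $d=2$.
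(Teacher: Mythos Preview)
Your argument is correct and follows essentially the same route as the paper's proof: restrict to $\SN_{3}\cong\Z_{2}^{2}$, use Proposition~\ref{UntergruppeTrivial} to find a non-trivial $\alpha$ in the kernel, verify that $\alpha\neq e_{1}e_{2}e_{3}$ so that Proposition~\ref{faktorisiertSL}(i) applies, and conclude via Proposition~\ref{SLTrivial}. Your justification that $e_{1}e_{2}e_{3}\notin{\rm SAut}(F_{3})$ (by its determinant under abelianization) is a slightly sharper version of the paper's observation that $e_{1}e_{2}e_{3}\notin\SN_{3}$, and the remark about $d=1$ is a harmless addition.
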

\begin{proof}
The group ${\rm SN}_{3}$ is isomorphic to $\Z_{2}\times\Z_{2}$ and by Proposition~\ref{UntergruppeTrivial}  it follows that the restriction of $\rho$ to this group is not injective. The element $e_1e_2e_3$ is not in~${\rm SN}_3$ and therefore by Proposition~\ref{faktorisiertSL} the map
$\rho$ factors through ${\rm SL}_3(\Z_2)$. Using Proposition \ref{SLTrivial} it follows that $\rho$ is trivial.
\end{proof}
\begin{lemma}
\label{n4}
Let $\rho:{\rm SAut}(F_{4})\rightarrow{\rm SL}_d(K)$ be a linear representation of degree $d$ with ${\rm char}(K)\neq 2$. If $d<4$, then $\rho$  is trivial.
\end{lemma}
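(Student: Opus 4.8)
The plan is to follow the proof of Lemma~\ref{n3} as closely as possible: I will produce an element $\alpha\in\SW_4-\{{\rm id}_{F_4},\,e_{1}e_{2}e_{3}e_{4}\}$ with $\rho(\alpha)=1$. Granting such an $\alpha$, Proposition~\ref{faktorisiertSL}$(i)$ shows that $\rho$ factors through ${\rm SL}_4(\Z_2)$, and then Proposition~\ref{SLTrivial} (applicable here since $4\geq 3$ and $d<4$) shows that $\rho$ is trivial. The only reason one cannot simply repeat the $n=3$ argument is that the distinguished element $e_{1}e_{2}e_{3}e_{4}$ now lies inside $\SN_4\cong\Z_2^3$, so applying Proposition~\ref{UntergruppeTrivial} to $\SN_4$ (legitimate, since $d\leq 3$) a priori only yields $e_{1}e_{2}e_{3}e_{4}\in\ker\rho$.

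So I would first set $N:=\ker(\rho|_{\SN_4})$, which is non-trivial by Proposition~\ref{UntergruppeTrivial}. If $N\not\subseteq\langle e_{1}e_{2}e_{3}e_{4}\rangle$, then any $\alpha\in N-\langle e_{1}e_{2}e_{3}e_{4}\rangle$ already lies in $\SW_4-\{{\rm id}_{F_4},\,e_{1}e_{2}e_{3}e_{4}\}$ and we are done. Hence the only case requiring work is $N=\langle e_{1}e_{2}e_{3}e_{4}\rangle$, that is, $\rho(e_{1}e_{2}e_{3}e_{4})=1$.

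In that case I would enlarge $\SN_4$ to the finite subgroup $H:=\SN_4\rtimes V\leq\SW_4$, where $V\leq\Sigma_{4}$ is the Klein four-group generated by the double transpositions $(x_1,x_2)(x_3,x_4)$ and $(x_1,x_3)(x_2,x_4)$; here $V\leq\SW_4$ because the image in $\GL_4(\Z)$ of a product of two transpositions has determinant $1$, and $e_{1}e_{2}e_{3}e_{4}$ lies in the center of $H$, so the quotient below is defined. The key point is the isomorphism $H/\langle e_{1}e_{2}e_{3}e_{4}\rangle\cong\Z_2^4$. Since this quotient has order $16$, it is enough to check that it is abelian and of exponent two, and both of these reduce to the elementary observation that for a double transposition $\sigma$ and a word $w$ of even weight in the $e_i$ one has $\sigma(w)w^{-1}\in\{{\rm id}_{F_4},\,e_{1}e_{2}e_{3}e_{4}\}$: indeed $\sigma(w)w^{-1}$ carries equal exponents on the two transposed pairs of indices, and evenness of the total weight forces these two exponents to agree. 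This controls simultaneously the squares $(w\sigma)^2=\sigma(w)w^{-1}$ for $w\in\SN_4$, $\sigma\in V$, and the commutators in $H$, which lie in $\SN_4$ since $V$ is abelian. Granting $H/\langle e_{1}e_{2}e_{3}e_{4}\rangle\cong\Z_2^4$: as $\rho(e_{1}e_{2}e_{3}e_{4})=1$, the restriction $\rho|_{H}$ descends to a homomorphism $\Z_2^4\to{\rm SL}_d(K)$, which by Proposition~\ref{UntergruppeTrivial} (now $d\leq 3<4$) is not injective; a non-trivial element of its kernel lifts to some $\alpha\in H\subseteq\SW_4$ with $\rho(\alpha)=1$ and $\alpha\notin\langle e_{1}e_{2}e_{3}e_{4}\rangle$, i.e. $\alpha\in\SW_4-\{{\rm id}_{F_4},\,e_{1}e_{2}e_{3}e_{4}\}$. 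Feeding this $\alpha$ into Proposition~\ref{faktorisiertSL}$(i)$ and Proposition~\ref{SLTrivial} finishes the proof.

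I expect the main obstacle to be the structural identity $H/\langle e_{1}e_{2}e_{3}e_{4}\rangle\cong\Z_2^4$, that is, exhibiting inside $\SW_4$ a finite $2$-subgroup whose quotient by the central involution $e_{1}e_{2}e_{3}e_{4}$ is elementary abelian of rank at least~$3$; note that $\SN_4$ by itself does not suffice, since $\SN_4/\langle e_{1}e_{2}e_{3}e_{4}\rangle$ has rank only~$2$, and this is precisely why the permutations in $V$ must be adjoined. Everything else is the same bookkeeping with Propositions~\ref{UntergruppeTrivial},~\ref{faktorisiertSL} and~\ref{SLTrivial} that was already used for $n=3$.
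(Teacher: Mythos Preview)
Your proof is correct and follows essentially the same approach as the paper: both arguments reduce to the case $\rho(e_1e_2e_3e_4)=1$ via Proposition~\ref{UntergruppeTrivial} applied to $\SN_4$, and then produce a $\Z_2^4$ from $\SN_4$ together with the Klein four-group $V$ of double transpositions to find a further element of $\SW_4-\{\mathrm{id},e_1e_2e_3e_4\}$ in the kernel. The only difference is cosmetic: the paper invokes Proposition~\ref{faktorisiertSL}$(ii)$ to pass to ${\rm PSL}_4(\Z)$ and locates the $\Z_2^4$ there as $\langle [E_{-1}E_{-2}],[E_{-2}E_{-3}],[P_{12}P_{34}],[P_{13}P_{24}]\rangle$, whereas you work directly inside $\SW_4$ and identify the same group as $H/\langle e_1e_2e_3e_4\rangle$; since $\SW_4$ maps isomorphically onto its image in ${\rm SL}_4(\Z)$, these are literally the same subgroup viewed two ways.
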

\begin{proof}
The subgroup ${\rm SN}_{4}$ is isomorphic to $\Z^{3}_{2}$ and by Proposition \ref{UntergruppeTrivial} it follows that~$\rho_{|{\rm SN}_{4}}$ is not injective. Therefore there exists an element
$\alpha\in{\rm SN}_{4}-\left\{{\rm id}_{F_{4}}\right\}$ with~$\rho(\alpha)={\rm I}_d$, where ${\rm I}_d$ is the unit matrix. 
If $\alpha$ is not equal to $e_1e_2e_3e_4$, then by Proposition \ref{faktorisiertSL} it follows that $\rho$ factors through ${\rm SL}_{4}(\Z_{2})$ and the triviality of $\rho$ follows by Proposition \ref{SLTrivial}.
If $\alpha$ is equal to $e_1e_2e_3e_4$ then by Proposition~\ref{faktorisiertSL} the map $\rho$ factors through ${\rm PSL}_{4}(\Z)$.
\[
\begin{xy}
\xymatrix
{
{\rm SAut}(F_{4}) \ar[rr]^{\rho} \ar[dr]_{\pi} & & {\rm SL}_d(K) \\
& {\rm PSL}_{4}(\Z) \ar[ur]_{\widetilde{\rho}} 
}
\end{xy}
\]
The group ${\rm PSL}_{4}(\Z)$ contains a subgroup $U$ which is isomorphic to $\Z^{4}_{2}$, namely   
\[
U=\langle\left\{[{\rm E}_{-1}\rm{E}_{-2}], [\rm{E}_{-2}\rm{E}_{-3}], [\rm{P}_{12}\rm{P}_{34}], [\rm{P}_{13}\rm{P}_{24}]\right\}\rangle,
\]
where we denote by ${\rm E}_{-i}$ the matrix  which has ones on the main diagonal except the entry $(i,i)$ and zeros elsewhere and the entry $(i,i)$ is equal to $-1$. The matrix ${\rm P}_{ij}$ is a permutation matrix. 

By Proposition \ref{UntergruppeTrivial} it follows that there exists an element $[A]\in U-\left\{[\rm{I}_4]\right\}$ with~$\widetilde{\rho}([A])={\rm I}_d$. We consider a preimage of $[A]$ under $\pi$ and note that there exists an element $\beta\in{\rm SW}_{n}-\left\{{\rm id}_{F_{n}},e_1e_2e_3e_4\right\}$  with~$\rho(\beta)=\widetilde{\rho}\circ\pi(\beta)=\widetilde{\rho}([A])={\rm I}_d$.
By Proposition \ref{faktorisiertSL} it follows that $\rho$ factors through ${\rm SL}_{4}(\Z_{2})$ and by Proposition \ref{SLTrivial} we obtain triviality of $\rho$.
\end{proof}
$ $

\textsl{Proof of main theorem.}\\
We proceed by induction on $n$.
We assume that $n>4$. 
If $\rho(e_1e_2)={\rm I}_d$, then by Proposition \ref{faktorisiertSL} the map $\rho$ factors  through ${\rm SL}_{n}(\Z_{2})$ and by Proposition \ref{SLTrivial} it follows that $\rho$ is trivial. Otherwise $\rho(e_1e_2)$ is a non-trivial involution. We note that the dimension of the eigenspace ${\rm Eig}(\rho(e_1 e_2), 1)$ is equal to  $d-2\cdot l$ for some $l\in\mathbb{N}_{>0}$.
The centralizer of $e_1e_2$, which we will denote by $C(e_1 e_2)$,
contains a subgroup $U$ isomorphic to ${\rm SAut}(F_{n-2})$. More precisely:
let $\left\{x_{1},\ldots,x_{n}\right\}$ be a basis of $F_{n}$ and let $\left\{x_{3},\ldots, x_{n}\right\}$ be a basis of $F_{n-2}$.
We define
\[
U:=\left\{f\in C(e_{1}e_{2})\mid f(x_{1})=x_{1}, f(x_{2})=x_{2} \text{ and} f_{|\left\{x_{3},\ldots, x_{n}\right\}}\in{\rm SAut}(F_{n-2})\right\}.
\]
Then $U$ is isomorphic to ${\rm SAut}(F_{n-2})$. By the induction assumption any homomorphism \[
\rho':{\rm SAut}(F_{n-2})\rightarrow{\rm SL}_{d'}(K)
\]
for $d'<n-2$ is trivial. The restriction of $\rho$ to $U$ acts on ${\rm Eig}(\rho(e_1 e_2), 1)\cong K^{d-2l}$ and therefore
\[
\rho_{|U}: U\rightarrow{\rm SL}_{d-2\cdot l}(K)
\]
is trivial. In particular, the element $e_3 e_4$ is in $U$ and acts trivially on ${\rm Eig}(\rho(e_1 e_2), 1)$. 
It follows that ${\rm Eig}(\rho(e_1 e_2,1)\subseteq{\rm Eig}(\rho(e_3 e_4), 1)$.
This argument is symmetric and we obtain  
\[
{\rm Eig}(\rho(e_1 e_2),1)={\rm Eig}(\rho(e_3 e_4),1).
\] 
But then we have two~commuting  involutions with equal eigenspaces of eigenvalue $1$, therefore $\rho(e_1 e_2)$ and~$\rho(e_3 e_4)$ are equal and the element~$e_1 e_2 e_3 e_4$ acts trivially.
We have $n>4$ and by Proposition~\ref{faktorisiertSL} the map $\rho$ factors through ${\rm SL}_{n}(\Z_2)$. By Proposition~\ref{SLTrivial} it follows that $\rho$ is trivial.

\hfill $\square$\par\medskip

We finish this work with the following remark.

\begin{remark}
The key ingredient in the proof of main theorem is that there exist only finitely many pairwise commuting involutions in ${\rm SL}_d(K)$ when ${\rm char}(K)\neq 2$, see Proposition \ref{UntergruppeTrivial}.
This is no longer true for infinite fields of characteristic $2$, as we can for example consider:
\[
\left\{
\begin{pmatrix}
1 & a & 0\\
0 & 1 & 0\\
0 & 0 & {\rm I}_{d-2}
\end{pmatrix} \Bigg\vert \ a\in K^ * \right\}\subseteq {\rm SL}_d(K).
\]
\end{remark}

\end{document}